\DeclareMathAlphabet{\mathfrak}{U}{euf}{m}{n}
\setlist[itemize]{leftmargin=*}
\let\oldproofname=\proofname
\renewcommand{\proofname}{\bfseries\textup{\oldproofname}}
\DeclareMathOperator{\Z}{\mathbb{Z}}
\DeclareMathOperator{\F}{\mathbb{F}}
\DeclareMathOperator{\colimit}{\textup{colim}}
\theoremstyle{plain}
\newtheorem{prop}{Proposition}[section]
\title{The $C_{2^n}$ Borel dual Steenrod algebra}
\author{Nick Georgakopoulos}
\begin{document}

	\begin{abstract}In this very short note, we expand the Hu-Kriz computation of the $G$-equivariant Borel dual Steenrod algebra in characteristic $2$, from the group $G=C_2$ to all power-$2$ cyclic groups $G=C_{2^n}$.
	\end{abstract}

	\maketitle{}
	
	\setcounter{tocdepth}{1}
	\tableofcontents
	
	\section{Introduction}\label{Intro}	
	
	In this companion piece to \cite{BC4S2}, we show that the $C_2$-equivariant Borel dual Steenrod algebra computation in \cite{HK96} generalizes to all groups $G=C_{2^n}$. More precisely, we give an explicit description of the $RO(C_{2^n})$-graded ring of the homotopy fixed points $(H\F_2\wedge H\F_2)_{\bigstar}^{hC_{2^n}}$ as a Hopf algebroid over $(H\F_2)^{hC_{2^n}}_{\bigstar}$, where $\F_2$ stands for the constant $C_{2^n}$-Green functor associated to the field of two elements. We also compare our description to the dual description of the Borel Steenrod algebra of \cite{Gre88}.

	\subsection*{Acknowledgment}We would like to thank Peter May for his numerous editing suggestions, including the idea to split off this paper from \cite{BC4S2}.
	
	\section{Conventions and notations}\label{Notations}
	
We will use the letter $k$ to denote the field $\F_2$ with trivial $G=C_{2^n}$ action, the constant $G$-Mackey functor $k=\underline{\F_2}$ and the corresponding equivariant Eilenberg-MacLane spectrum $Hk$. The meaning should always be clear from the context.\medbreak

Henceforth all our co/homology will be in $k$ coefficients. We use $k_{\bigstar}(X)$ to denote the $RO(G)$-graded Mackey functor of $G$-equivariant homology in $k$-coefficients. The value of $k_{\bigstar}(X)$ on the $G/H$ orbit is denoted by  $k_{\bigstar}^H(X)$. \medbreak
	
The real representation ring $RO(C_{2^n})$ is spanned by the irreducible representations $1,\sigma, \lambda_{s,k}$ where $\sigma$ is the $1$-dimensional sign representation and $\lambda_{s,m}$ is the $2$-dimensional representation given by rotation by $2\pi s (m/2^n)$ degrees for $1\le m$ dividing $2^{n-2}$ and odd $1\le s<2^n/m$. Note that $2$-locally, $S^{\lambda_{s,m}}\simeq S^{\lambda_{1,m}}$ as $C_{2^n}$-equivariant spaces, by the $s$-power map. Therefore, to compute $k_{\bigstar}(X)$ for $\bigstar \in RO(C_{2^n})$ it suffices to only consider $\bigstar$ in the span of $1,\sigma,\lambda_k:=\lambda_{1,2^k}$ for $0\le k\le n-2$ ($\lambda_{n-1}=2\sigma$ and $\lambda_n=2$).\medbreak

For $V=\sigma$ or $V=\lambda_m$, denote by $a_V\in k_{-V}^{C_{2^n}}$ the Euler class induced by the inclusion of north and south poles $S^0\hookrightarrow S^V$; also denote by $u_V\in k_{|V|-V}^{C_{2^n}}$ the orientation class generating the Mackey functor  $k_{|V|-V}=k$ (\cite{HHR16}).

\section{Borel cohomology}

Let $EG$ be a contractible free $G$-space and $\tilde EG$ be the cofiber of the collapse map $EG_+\to S^0$.	For a spectrum $X$ we use the notation $X_h=EG_+\wedge X$, $X^h=F(EG_+,X)$ and $X^t=\tilde EG\wedge X^h$; there is a cofiber sequence
$$X_h\to X^h\to X^t$$
The $G$-fixed points of $X_h,X^h,X^t$ are the nonequivariant spectra of homotopy orbits $X_{hG}$, homotopy fixed points $X^{hG}$ and Tate fixed points $X^{tG}$ respectively.

The orientation classes $u_V:k\wedge S^{|V|}\to k\wedge S^{V}$ are nonequivariant equivalences, hence induce $G$-equivalences in $X_h,X^h,X^t$ for a $k$-module $X$, so they act invertibly on $X_{h\bigstar}, X^h_{\bigstar}$ and $X^{t}_{\bigstar}$. This implies that
	\begin{equation}
	X_{h\bigstar}\approx X_{h|\bigstar|}\text{ , }X^h_{\bigstar}=X^h_{|\bigstar|}\text{ , }X^t_{\bigstar}=X^t_{|\bigstar|}
	\end{equation}
	and the $RO(G)$ graded part is determined by the integer graded part.	
	
	\begin{prop}For $G=C_{2^n}$ and $n>1$:
	 \begin{gather}
	 k^{hG}_{\bigstar}=k[a_{\sigma},a_{\lambda_0}, u_{\sigma}^{\pm}, u_{\lambda_0}^{\pm},...,u_{\lambda_{n-2}}^{\pm}]/a_{\sigma}^2\\	
	 k^{tG}_{\bigstar}=k[a_{\sigma},a_{\lambda_0}^{\pm}, u_{\sigma}^{\pm}, u_{\lambda_0}^{\pm},...,u_{\lambda_{n-2}}^{\pm}]/a_{\sigma}^2
	 \end{gather}
	 and $k_{hG\bigstar}=\Sigma^{-1}k^{tG}_{\bigstar}/k^{hG}_{\bigstar}$ (forgetting the ring structure). The map $k_{hG\bigstar}\to  k^{hG}_{\bigstar}$ is trivial.
	 \end{prop}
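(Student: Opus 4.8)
The plan is to exploit the reduction recorded just above: since the orientation classes $u_\sigma, u_{\lambda_0},\dots,u_{\lambda_{n-2}}$ act invertibly on $X_{h\bigstar}$, $X^h_\bigstar$ and $X^t_\bigstar$, it suffices to determine the integer-graded parts and then adjoin these invertible classes, writing each integer generator as $(a_\sigma u_\sigma^{-1})u_\sigma$, $(a_{\lambda_0}u_{\lambda_0}^{-1})u_{\lambda_0}$, and so on. First I would identify the integer part of $k^{hG}$ with Borel cohomology, $k^{hG}_{-m}=H^m(BC_{2^n};\F_2)$, and recall the classical computation $H^*(BC_{2^n};\F_2)=\F_2[x,y]/(x^2)$ with $|x|=1$, $|y|=2$, valid for $n>1$. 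The one point to isolate here is the relation $x^2=0$: one has $x^2=Sq^1x$, and $Sq^1$ is the mod-$2$ Bockstein, so $x^2$ is the reduction of the integral Bockstein $\widetilde\beta x\in H^2(BC_{2^n};\Z)=\Z/2^n$, namely of the $2$-torsion element $2^{n-1}$; this reduces to $0$ in $H^2(BC_{2^n};\F_2)$ precisely when $n>1$ (for $n=1$ it is the generator, which is why $H^*(BC_2;\F_2)$ is a genuine polynomial ring).

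Next I would match the ring generators. Under the canonical map $k^{C_{2^n}}_\bigstar\to k^{hG}_\bigstar$ followed by the re-grading isomorphism, the Euler class $a_V$ goes to the mod-$2$ Euler class of the bundle $EG\times_G V\to BG$. For $V=\sigma$ this is the nontrivial real line bundle pulled back from $BC_2$ along $C_{2^n}\twoheadrightarrow C_2$, and its first Stiefel--Whitney class is the generator of $H^1(BC_{2^n};\F_2)$; so $a_\sigma\mapsto x$. For $V=\lambda_0=\lambda_{1,1}$ the bundle is the realification of the tautological complex line bundle $L$, so its mod-$2$ Euler class is $c_1(L)\bmod 2$; since $c_1(L)$ generates $H^2(BC_{2^n};\Z)=\Z/2^n$ and reduction mod $2$ is onto, this is the generator $y$, so $a_{\lambda_0}\mapsto y$. (The same computation for $\lambda_{1,2^k}$ with $k\ge1$ gives $c_1\equiv 2^k\equiv 0$, so $a_{\lambda_k}$ maps to zero and need not be listed.) Together with the first paragraph this yields $k^{hG}_\bigstar=k[a_\sigma,a_{\lambda_0},u_\sigma^{\pm},u_{\lambda_0}^{\pm},\dots,u_{\lambda_{n-2}}^{\pm}]/a_\sigma^2$, the first asserted formula.

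For the Tate term, observe that $\lambda_0=\lambda_{1,1}$ is a faithful $2$-dimensional real representation of $C_{2^n}$ --- the minimal nontrivial subgroup $C_2$ acts on it by $-1$ --- so we may take $EC_{2^n}=S(\infty\lambda_0)$, whence $\tilde EG=S^{\infty\lambda_0}$ and $X^t=\tilde EG\wedge X^h$ is the localization $a_{\lambda_0}^{-1}X^h$. As $a_{\lambda_0}$ is a non-zero-divisor in $k^{hG}_\bigstar$, this gives $k^{tG}_\bigstar=a_{\lambda_0}^{-1}k^{hG}_\bigstar=k[a_\sigma,a_{\lambda_0}^{\pm},u_\sigma^{\pm},u_{\lambda_0}^{\pm},\dots,u_{\lambda_{n-2}}^{\pm}]/a_\sigma^2$ and shows that $k^{hG}_\bigstar\to k^{tG}_\bigstar$ is injective. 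Plugging this into the long exact sequence in $\pi_\bigstar$ of the cofiber sequence $X_h\to X^h\to X^t$, injectivity of the middle map breaks it into short exact sequences $0\to k^{hG}_\bigstar\to k^{tG}_\bigstar\to k_{hG,\bigstar-1}\to 0$, identifying $k_{hG\bigstar}$ with $\Sigma^{-1}k^{tG}_\bigstar/k^{hG}_\bigstar$ as graded abelian groups and forcing the map $k_{hG\bigstar}\to k^{hG}_\bigstar$ induced by $X_h\to X^h$ to vanish, since its image is the kernel of an injection. I expect the only real obstacle to be the two facts specific to $n>1$ --- that $x^2=0$ in $H^*(BC_{2^n};\F_2)$ and that $a_{\lambda_0}$ hits the nonzero class $y$ --- each of which amounts to $2^{n-1}$ being even; the remaining re-grading bookkeeping and the long exact sequence argument are routine.
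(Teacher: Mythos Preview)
Your argument is correct and is essentially the paper's own: reduce to the integer grading via the invertible $u_V$'s, identify that part with $H^*(BC_{2^n};\F_2)=k[a]/a^2\otimes k[b]$ (with $a=a_\sigma u_\sigma^{-1}$, $b=a_{\lambda_0}u_{\lambda_0}^{-1}$), and obtain $k^{tG}$ by inverting $a_{\lambda_0}$ since $\tilde EG=S^{\infty\lambda_0}$. You supply more detail than the paper does---the Bockstein reason for $x^2=0$, the characteristic-class identification of the Euler classes, and the long-exact-sequence deduction of the last two claims---but the strategy is the same.
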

 
 \begin{proof}The homotopy fixed point spectral sequence becomes:
 	\begin{equation}
 		H^*(G;k)[u_{\sigma}^{\pm}, u_{\lambda_0}^{\pm},...,u_{\lambda_{n-2}}^{\pm}]\implies k^{hG}_{\bigstar}
 	\end{equation}
 We have $H^*(G;k)=k^*BG=k[a]/a^2\otimes k[b]$ where $|a|=1$ and $|b|=2$. The spectral sequence collapses with no extensions and we can identify $a=a_{\sigma}u_{\sigma}^{-1}$ and $b=a_{\lambda_0}u_{\lambda_0}^{-1}$. Finally, $\tilde EG=S^{\infty \lambda_0}=\colimit (S^{\lambda_0}\xrightarrow{a_{\lambda_0}}S^{\lambda_0}\xrightarrow{a_{\lambda_0}}\cdots)$ so to get $k^{tG}_{\bigstar}$ we are additionally inverting $a_{\lambda_0}$.
 
 \end{proof}
	 For $G=C_2$ we have
	 	 \begin{gather}
	 	k^{hC_2}_{\bigstar}=k[a_{\sigma}, u_{\sigma}^{\pm}]\\	
	 	k^{tC_2}_{\bigstar}=k[a_{\sigma}^{\pm}, u_{\sigma}^{\pm}]
	 \end{gather}
	 and $k_{hC_2\bigstar}=\Sigma^{-1}k^{tC_2}_{\bigstar}/k^{hC_2}_{\bigstar}$ (forgetting the ring structure). The map $k_{hC_2\bigstar}\to  k^{hC_2}_{\bigstar}$ is trivial.
	 
	 \section{The Borel dual Steenrod algebra}\label{Borel}
	 
	 The $G$-Borel dual Steenrod algebra is
	 \begin{equation}
	 (k\wedge k)_{\bigstar}^{hG}
	 \end{equation}
	 This is a Hopf algebroid over $k_{\bigstar}^{hG}$.
	 
	 We will implicitly be completing it at the ideal generated by $a_{\sigma}$ for $G=C_2$, and at the ideal generated by $a_{\lambda_0}$ for $G=C_{2^n}$, $n>1$ (see \cite{HK96} pg. 373 for more details in the case of $G=C_2$). With this convention, Hu-Kriz computed the $C_2$-Borel dual Steenrod algebra to be
	 \begin{equation}
	 (k\wedge k)_{\bigstar}^{hC_2}=k_{\bigstar}^{hC_2}[\xi_i]
	 \end{equation}
	 for $|\xi_i|=2^i-1$ ($\xi_0=1$). The generators $\xi_i$ restrict to the Milnor generators in the nonequivariant dual Steenrod algebra and 
	 \begin{gather}
	 		 \Delta(\xi_i)=\sum_{j+k=i}\xi_j^{2^k}\otimes \xi_k\\
	 	\epsilon(\xi_i)=0\text{ , }i\ge 1\\
	 \eta_R(a_{\sigma})=a_{\sigma}\\
	 \eta_R(u_{\sigma})^{-1}=\sum_{i=0}^{\infty}a^{2^i-1}_{\sigma}u^{-2^i}_{\sigma}\xi_i
	 \end{gather}

	 \begin{prop}\label{Power2Borel}For $G=C_{2^n}$, $n>1$, 
	 	 \begin{equation}
	 (k\wedge k)^{hG}_{\bigstar}=k^{hG}_{\bigstar}[\xi_i]
	 \end{equation}
	 for $|\xi_i|=2^i-1$ restricting to the $C_{2^{n-1}}$ generators $\xi_i$, with
	 \begin{gather}
	 		\Delta(\xi_i)=\sum_{j+k=i}\xi_j^{2^k}\otimes \xi_k\\
	 		\epsilon(\xi_i)=0\text{ , }i\ge 1\\
	 \eta_R(a_{\sigma})=a_{\sigma}\text{ , }\eta_R(a_{\lambda_0})=a_{\lambda_0}\\
	 \eta_R(u_{\sigma})=u_{\sigma}+a_{\sigma}\xi_1\\
	 \eta_R(u_{\lambda_m})=u_{\lambda_m}\text{ , }m>0\\
	 \eta_R(u_{\lambda_0})^{-1}=\sum_ia^{2^i-1}_{\lambda_0}u^{-2^i}_{\lambda_0}\xi_i^2
	 \end{gather}
 \end{prop}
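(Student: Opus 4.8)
The plan is to bootstrap from the nonequivariant dual Steenrod algebra and the computation of $k^{hG}_{\bigstar}$ recorded above, using the collapse of the homotopy fixed point spectral sequence together with the naturality of Borel constructions in the group. First I would pin down the algebra structure. Since $G$ acts trivially on $k$ it acts trivially on $k\wedge k$, so the underlying homotopy is the nonequivariant dual Steenrod algebra $\mathcal A_*=k[\xi_1,\xi_2,\dots]$ and the homotopy fixed point spectral sequence reads $H^*(G;k)\otimes_k\mathcal A_*\Rightarrow (k\wedge k)^{hG}_*$. This is a module spectral sequence over the one computing $k^{hG}_*$, which collapses; the Milnor generators $\xi_i$, transported to $(k\wedge k)^{hG}_*$ via the map $k\wedge k\to(k\wedge k)^h$ induced by $EG_+\to S^0$, sit on the zero line and are permanent cycles, so the Leibniz rule forces every differential to vanish on $H^*(G;k)\otimes_k\mathcal A_*$. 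There are no multiplicative extensions, since the $\xi_i$ have filtration $0$ and the associated graded is already polynomial over $k^{hG}_*$; reinstating the $RO(G)$-grading by means of the invertible classes $u_V$ yields $(k\wedge k)^{hG}_{\bigstar}=k^{hG}_{\bigstar}[\xi_i]$, and since the $\xi_i$ are pulled back from a point they restrict along $C_{2^{n-1}}\hookrightarrow C_{2^n}$ to the $C_{2^{n-1}}$-classes of the same name.

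Next comes the Hopf algebroid structure. Because Borel homotopy fixed points are lax symmetric monoidal and natural, $k\wedge k\to(k\wedge k)^{hG}$ is a map of Hopf algebroids over $k\to k^{hG}$, so $\Delta(\xi_i)=\sum_{j+l=i}\xi_j^{2^l}\otimes\xi_l$ and $\epsilon(\xi_i)=0$ follow from Milnor's formulas, and $\eta_L$ is the structure map. As $\eta_R$ is a ring homomorphism it is determined on the generators $a_\sigma,a_{\lambda_0},u_\sigma^{\pm},u_{\lambda_m}^{\pm}$. The Euler classes $a_\sigma,a_{\lambda_0}$ factor through the unit $S^0\to k$, on which $\eta_L$ and $\eta_R$ agree, so $\eta_R(a_\sigma)=a_\sigma$ and $\eta_R(a_{\lambda_0})=a_{\lambda_0}$. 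Since $\sigma$ is inflated along the quotient $q\colon C_{2^n}\to C_2$, the classes $u_\sigma$ and $a_\sigma$ are inflations of the corresponding $C_2$-classes; inflating the Hu--Kriz identity $\eta_R(u_\sigma)^{-1}=\sum_i a_\sigma^{2^i-1}u_\sigma^{-2^i}\xi_i$ and using $a_\sigma^2=0$ (valid in $k^{hC_{2^n}}_{\bigstar}$ for $n>1$) collapses the sum to $u_\sigma^{-1}+a_\sigma u_\sigma^{-2}\xi_1$, and inverting gives $\eta_R(u_\sigma)=u_\sigma+a_\sigma\xi_1$.

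The remaining, and hardest, point is $\eta_R(u_{\lambda_0})$, which I would compute by comparison with the universal complex line bundle: $EG\times_G\lambda_0\to BG$ is pulled back from it along the map $BG\to BS^1$ induced by $C_{2^n}\hookrightarrow S^1$. Over $BS^1$, writing $x\in H^2(BS^1;k)$ for the Euler class of the tautological bundle, one has $\eta_R(x)=\sum_i x^{2^i}\xi_i^2$: pulling back along $BC_2\to BS^1$ sends $x\mapsto a^2$ (with $a=a_\sigma u_\sigma^{-1}$), while over $BC_2$ one has $\eta_R(a)=a_\sigma\eta_R(u_\sigma)^{-1}=\sum_i a^{2^i}\xi_i$, hence $\eta_R(a^2)=\sum_i a^{2^{i+1}}\xi_i^2$, which together with the injectivity of $x\mapsto a^2$ determines $\eta_R(x)$. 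Pulling $\eta_R(x)=\sum_i x^{2^i}\xi_i^2$ back to $BG$ (so $x\mapsto b:=a_{\lambda_0}u_{\lambda_0}^{-1}$), using $\eta_R(a_{\lambda_0})=a_{\lambda_0}$ and dividing by the non-zero-divisor $a_{\lambda_0}$, yields $\eta_R(u_{\lambda_0})^{-1}=\sum_i a_{\lambda_0}^{2^i-1}u_{\lambda_0}^{-2^i}\xi_i^2$. Finally, for $1\le m\le n-2$ the representation $\lambda_m$ is inflated along $C_{2^n}\to C_{2^{n-m}}$ from the faithful rotation representation, and its Euler class $a_{\lambda_m}$ — whose reduction is the $w_2$ of a bundle pulled back along a map trivial on mod-$2$ degree-$2$ cohomology — vanishes in $k^{hC_{2^n}}_{\bigstar}$, so inflating the $\lambda_0$-formula collapses it to $\eta_R(u_{\lambda_m})=u_{\lambda_m}$.

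The hard part, then, is the $u_{\lambda_0}$ right-unit formula: one must correctly recognize $EG\times_G\lambda_0$ as a pullback of the universal complex line bundle and compute the $\mathcal A_*$-comodule structure on its Thom class — the squares $\xi_i^2$, reflecting that $u_{\lambda_0}$ is a complex rather than a real orientation, being the delicate feature; everything else is either formal or a direct transport of the Hu--Kriz $C_2$ computation along inflation. The other point requiring care is convergence and completeness, which is why one completes at $a_{\lambda_0}$: this is needed both to promote the associated-graded isomorphism to the claimed ring isomorphism and to legitimize the divisions by Euler classes.
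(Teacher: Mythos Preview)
Your argument is correct and follows essentially the same strategy as the paper: collapse of the homotopy fixed point spectral sequence (using that $k\wedge k$ is nonequivariantly free over $k$), inheritance of $\Delta$ and $\epsilon$ from the nonequivariant dual Steenrod algebra, fixing of Euler classes under $\eta_R$, and inflation along quotient maps $C_{2^n}\to C_{2^{n-m}}$ to handle $u_\sigma$ and $u_{\lambda_m}$ for $m>0$.

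The one genuine point of departure is the computation of $\eta_R(u_{\lambda_0})$. The paper works directly in $H^*(BC_{2^n};k)=k[a]/a^2\otimes k[b]$ and determines the coaction on $b$ by checking that $Sq^1(b)=0$, which it deduces from the Bockstein long exact sequence and the fact that $H^2(C_{2^n};\Z/4)=\Z/4$ for $n>1$. You instead factor $b$ through the universal complex orientation: you pull back along $BC_{2^n}\to BS^1$, determine the coaction on $x\in H^2(BS^1)$ by restricting further to $BC_2$ (where it is forced by the Hu--Kriz formula and the injectivity of $x\mapsto a^2$), and then push the result to $BG$. Both routes arrive at $b\mapsto\sum_i b^{2^i}\otimes\xi_i^2$; the paper's argument is more self-contained (a single Bockstein check), while yours makes the appearance of the squares $\xi_i^2$ transparently a consequence of $u_{\lambda_0}$ being a complex, rather than real, orientation class. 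The paper also spells out the strong convergence via an explicit $\lim^1$ vanishing on the skeletal filtration of the lens space, which you acknowledge but leave implicit.
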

	 \begin{proof}The computation of $(k\wedge k)^{hG}_*=(k\wedge k)^*(BG)$ follows from the computation of $k^{hG}_*=k^*(BG)=k[a]/a^2\otimes k[b]$ and the fact that nonequivariantly, $k\wedge k$ is a free $k$-module. To see that the homotopy fixed point spectral sequence for $k\wedge k$ converges strongly, let $F^iBG$ be the skeletal filtration on the Lens space $BG=S^{\infty}/C_{2^n}$; we can then compute directly that $\lim_i^1(k\wedge k)^*(F^iBG)=\lim_i^1(k[a]/a^2\otimes k[b]/b^i)=0$. 
	 	
	 Thus we get $(k\wedge k)_{\bigstar }^{hG}=k_{\bigstar}^{hG}[\xi_i]$ and the diagonal $\Delta$ and augmentation $\epsilon$ are the same as in the nonequivariant case. The Euler classes $a_{\sigma}, a_{\lambda_0}$ are maps of spheres so they are preserved under $\eta_R$. The action of $\eta_R$ on $u_{\sigma}, u_{\lambda_0}$ can be computed through the right coaction on $k_{\bigstar}^{hG}$: The (completed) coaction of the nonequivariant dual Steenrod algebra on $k^*(BG)=k[a]/a^2\otimes k[b]$ is
	 	\begin{gather}
	 		a\mapsto a\otimes 1\\
	 		b\mapsto \sum_ib^{2^i}\otimes \xi_i^2
	 	\end{gather}
 	To verify the formula for the coaction on $b$ we need to check that $Sq^1(b)=0$ (the alternative is $Sq^1(b)=ab$). From the long exact sequence associated to $0\to \Z/2\to \Z/4\to \Z/2\to 0$, we can see that the vanishing of the Bockstein on $b$ follows from $H^2(C_{2^n};\Z/4)=\Z/4$ ($n>1$).
 	
	 	After identifying $a=a_{\sigma}u_{\sigma}^{-1}$ and $b=a_{\lambda_0}u_{\lambda_0}^{-1}$ we get the formula for $\eta_R(u_{\lambda_0})$ and also that
	 	\begin{equation}
	 		\eta_R(u_{\sigma})=u_{\sigma}+\epsilon a_{\sigma}\xi_1
	 	\end{equation}
	 	where $\epsilon$ is either $0$ or $1$. This is equivalent to $$\eta_R(u_{\sigma}^{-1})=u_{\sigma}^{-1}+\epsilon a_{\sigma}u_{\sigma}^{-2}\xi_1$$ and to see that $\epsilon=1$ we use the map $k^{hC_2}=k^{h(C_{2^n}/C_{2^{n-1}})}\to k^{hC_{2^n}}$ that sends $a_{\sigma},u_{\sigma}$ to $a_{\sigma},u_{\sigma}$ respectively. Finally, to compute $\eta_R(u_{\lambda_m})$ for $m>0$ note that  $$k^{hC_{2^{n-m}}}=k^{hC_{2^n}/C_{2^m}}\to k^{hC_{2^n}}$$ sends $a_{\lambda_0},u_{\lambda_0}$ to $a_{\lambda_m}=0,u_{\lambda_m}$ respectively.
	 \end{proof}
 
 \section{Comparison with Greenlees's description}
 We now compare our result with the description of the Borel Steenrod algebra given in \cite{Gre88}, which is dual to our calculation. 
 
 In our notation, the $G$-spectrum $b$ of \cite{Gre88} is $b=k^h$ and $b^V(X)$ corresponds to $(k^h)^{|V|}_G(X)$; to get $(k^h)^V_G(X)$ we need to multiply with the invertible element $u_V\in k^{hG}_{|V|-V}$. The Borel Steenrod algebra is $b^{\bigstar}_Gb=(k^h)_G^{\bigstar}(k^h)$ and the  Borel dual Steenrod algebra is $b_{\bigstar}^Gb=(k^h)^G_{\bigstar}(k^h)=(k\wedge k)^{hG}_{\bigstar}$.
 
 Greenlees proves that the Borel Steenrod algebra is given by the Massey-Peterson  twisted tensor product (\cite{MP65}) of the nonequivariant Steenrod algebra $k^*k$ and the Borel cohomology of a point $(k^h)_G^{\bigstar}=k^{hG}_{-\bigstar}$. The twisting has to do with the fact that the action of the Borel Steenrod algebra on $x\in (k^h)_G^{\bigstar}(X)$ is given by:
 \begin{equation}
 	(\theta\otimes a)(x)=\theta(ax)
 \end{equation}
where $\theta\in k^*k$ and $a\in k^{hG}_{\bigstar}$. The product of elements $\theta\otimes a$ and $\theta'\otimes a'$ in the Borel Steenrod algebra is not $\theta\theta'\otimes aa'$, since $\theta$ does not commute with cup-products, but rather satisfies the Cartan formula:
\begin{equation}
	\theta(ab)=\sum_i\theta_i'(a)\theta_i''(b)\text{ , }\Delta \theta=\sum_i\theta_i'\otimes \theta''_i
\end{equation}
Therefore:
\begin{equation}
	(\theta\otimes a)(\theta'\otimes a')(x)=\theta(a\theta'(a'x))=\sum_i\theta_i'(a)(\theta''_i\theta')(a'x)
\end{equation}
so
\begin{equation}\label{Product}
	(\theta\otimes a)(\theta'\otimes a')=\sum_i\theta_i'(a)(\theta''_i\theta'\otimes a')
\end{equation}
(we have ignored signs as we are working in characteristic $2$).\\ So the Borel Steenrod algebra is $k^*k\otimes k^{hG}_{\bigstar}$ with twisted algebra structured defined by \eqref{Product}.

Moreover, Greenlees expresses the action of $k^*k$ on $(k^h)^{\bigstar}_G(X)$ in terms of the action of $k^*k$ on the orientation classes $u_V$ and the usual (nonequivariant) action of $k^*k$ on $(k^h)^*_G(X)=k^*(X\wedge_GEG_+)$. This is done through the Cartan formula: If $x\in (k^h)^V_G(X)$ then $u_V^{-1}x\in (k^h)^{|V|}_G(X)$ and
\begin{equation}
	\theta(x)=\theta(u_Vu_V^{-1}x)=\sum_i\theta_i'(u_V)\theta''_i(u_V^{-1}x)
\end{equation}
What remains to compute is $\theta_i'(u_V)$, namely the action of $k^*k$ on orientation classes. 

In our case, for $G=C_{2^n}$, we can see that:
\begin{prop}The action of $k^*k$ on orientation classes is determined by:
\begin{gather}
	Sq^i(u_{\sigma})=\begin{cases}u_{\sigma}&i=0\\
		a_{\sigma}&i=1\\
		0&\text{ otherwise}
	\end{cases}\\
	Sq^i(u_{\lambda_m})=\begin{cases}u_{\lambda_m}&i=0\\
		a_{\lambda_0}&i=2, m=0\\
		0&\text{ otherwise}
		\end{cases}
\end{gather}
\end{prop}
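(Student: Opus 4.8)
The plan is to deduce the Steenrod action on the orientation classes from the right unit $\eta_R$ already computed in Proposition~\ref{Power2Borel}. After the $a_{\lambda_0}$-completion, the $(k\wedge k)^{hG}_{\bigstar}$-comodule structure on $k^{hG}_{\bigstar}$ is exactly $\eta_R$ (here one reuses, as in the proof of Proposition~\ref{Power2Borel}, that $k\wedge k$ is $k$-free and that the homotopy fixed point spectral sequence converges), and dually a Steenrod operation $\theta\in k^{*}k$ acts on $x\in k^{hG}_{\bigstar}$ by writing $\eta_R(x)\in k^{hG}_{\bigstar}[\xi_i]$ in the monomial basis of the $\xi$'s and pairing against $\theta$ with the Milnor pairing. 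Equivalently, this is Greenlees's Wu formula $Sq^i(u_V)=\pi^{*}(w_i)\cup u_V$, with $w_i$ running over the Stiefel--Whitney classes of the rank-zero virtual bundle $|V|-V$ over $BG$, i.e.\ over the homogeneous pieces of $w(V)^{-1}$. Since $\eta_R$ is a ring map it is enough to treat $a_\sigma,a_{\lambda_0}$ and the orientation classes, using the Cartan formula to pass between $u_V$ and $u_V^{-1}$.

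First, $\eta_R(a_\sigma)=a_\sigma$ and $\eta_R(a_{\lambda_0})=a_{\lambda_0}$ (these are maps of representation spheres), so every positive-degree $Sq^i$ annihilates them. For $u_\sigma$ the formula $\eta_R(u_\sigma)=u_\sigma+a_\sigma\xi_1$ is a \emph{finite} sum because $a_\sigma^2=0$, and $\langle Sq^i,\xi_1\rangle=\delta_{i,1}$, so $Sq^0(u_\sigma)=u_\sigma$, $Sq^1(u_\sigma)=a_\sigma$ and $Sq^{i}(u_\sigma)=0$ for $i\ge 2$; in Greenlees's language $\sigma$ is a real line bundle over $BG$ with $w_1(\sigma)=a_\sigma u_\sigma^{-1}$ and $w_1(\sigma)^2=0$. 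For $m>0$ we have $\eta_R(u_{\lambda_m})=u_{\lambda_m}$ (since $a_{\lambda_m}=0$, $\lambda_m$ being pulled back along $BC_{2^n}\to BC_{2^{n-m}}$); equivalently $\lambda_m$ carries a complex structure, so $w_1(\lambda_m)=0$, while $w_2(\lambda_m)=c_1(\lambda_m)\bmod 2=2^m c_1(\lambda_0)\bmod 2=0$. Hence $Sq^i(u_{\lambda_m})=0$ for all $i\ge 1$ when $m>0$.

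It remains to handle $u_{\lambda_0}$. From $\eta_R(u_{\lambda_0})^{-1}=\sum_i a_{\lambda_0}^{2^i-1}u_{\lambda_0}^{-2^i}\xi_i^2$ one must know how each $\xi_i^2$ pairs with $k^{*}k$: the only square in the matching degree $2(2^i-1)$ pairs with $\xi_i^2$ by the coefficient $\binom{2}{2(2^i-1)}$ — test against $k^{*}(BC_2)=k[t]$, where $\psi(t^2)=\sum_i t^{2^{i+1}}\otimes\xi_i^2$ — and this is $1$ for $i=1$ and $0$ for $i\ge 2$. Therefore $Sq^2(u_{\lambda_0}^{-1})=a_{\lambda_0}u_{\lambda_0}^{-2}$, and feeding this into the Cartan formula for $u_{\lambda_0}\cdot u_{\lambda_0}^{-1}=1$ — first $Sq^1(u_{\lambda_0})=0$ from the odd part, then $Sq^2(u_{\lambda_0})=a_{\lambda_0}$ from the degree-$2$ part, the remaining operations being read off in the same way — produces the stated list; this matches Greenlees's $Sq^2(u_{\lambda_0})=w_2(\lambda_0)u_{\lambda_0}$, since $w_2(\lambda_0)=c_1(\lambda_0)\bmod 2$ corresponds under $b=a_{\lambda_0}u_{\lambda_0}^{-1}$ to the degree-$2$ generator of $H^{*}(BG;k)$ (using $H^2(BG;\Z)=\Z/2^n$, $n>1$). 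The step most in need of care is the first one, pinning down the dictionary between $\eta_R$ and the $k^{*}k$-action and thereby which $\xi$-monomials are actually detected by Steenrod squares; once that is in place, everything reduces to the formulas of Proposition~\ref{Power2Borel} together with routine Cartan manipulations.
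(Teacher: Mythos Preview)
Your argument is correct and follows exactly the approach the paper intends: its entire proof is the single sentence ``Compare with the proof of Proposition~\ref{Power2Borel},'' i.e.\ dualize the $\eta_R$/coaction formulas established there, which is precisely what you do (with the added Stiefel--Whitney interpretation as a consistency check). Your write-up is considerably more detailed than the paper's one-line pointer, but the underlying method is the same.
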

\begin{proof} Compare with the proof of Proposition \ref{Power2Borel}.\end{proof}
The twisting in the case of the Borel dual Steenrod algebra corresponds to the fact that $(k\wedge k)^{hG}_{\bigstar}$ is a Hopf algebroid and not a Hopf algebra; computationally this amounts to the formula for $\eta_R$ of Proposition \ref{Power2Borel}. 
	
\phantom{1}\smallbreak

\begin{small}
	\noindent  \textsc{Department of Mathematics, University of Chicago}\\
	\textit{E-mail:} \verb|nickg@math.uchicago.edu|\\
	\textit{Website:} \href{http:://math.uchicago.edu/~nickg}{math.uchicago.edu/$\sim$nickg}
\end{small}

\end{document}